\documentclass[11pt,a4paper]{amsart}
\usepackage{amsmath,amssymb,amsthm,mathtools}
\usepackage[T1]{fontenc}
\usepackage{lmodern}
\usepackage{microtype}
\usepackage{hyperref}
\hypersetup{colorlinks=true,linkcolor=blue,citecolor=blue,urlcolor=blue}
\numberwithin{equation}{section}
\newtheorem{theorem}{Theorem}[section]
\newtheorem{lemma}[theorem]{Lemma}
\newtheorem{corollary}[theorem]{Corollary}

\newcommand{\ep}{\mathrm e}

\title[Weighted distribution of factorial valuations]{Weighted equidistribution of factorial prime valuations in fixed power bands}
\author{Ma Yicen}
\address{Zhejiang University, Hangzhou, China}
\date{September 27, 2026; research draft for independent review}
\subjclass[2020]{11N25, 11L20, 11D61}
\keywords{factorial valuations, exponential sums over primes, polynomial--factorial equations}

\begin{document}
\begin{abstract}
For a fixed modulus $h$ and residue class $c$, we study the logarithmic mass of primes $p$ in a fixed power band for which $\nu_p(n!)\equiv c\pmod h$. We give a proof of the asymptotic $\bigl((b-a)/h+o(1)\bigr)\log(n!)$ for $n^a<p\le n^b$, with $0\le a<b\le1$. The argument combines a fixed-order derivative estimate, Vaughan's identity, and finite Fourier approximation of a discontinuous valuation function. We derive a necessary capacity condition for infinite families of solutions to $P(x)=n!$. The proof and its relation to prior work require independent specialist review; the asymptotic provides no effective threshold and does not resolve the Brocard--Ramanujan equation.
\end{abstract}
\maketitle

\section{Introduction and statements}

Write $v_p(n!)=\sum_{j\ge1}\lfloor n/p^j\rfloor$ and $F_n=\log(n!)$. For a set of positive integers $A$ and an interval $J$ of primes, put
\begin{equation}\label{eq:mass}
 \Psi(n!,A;J)=\sum_{\substack{p\in J\\v_p(n!)\in A}}v_p(n!)\log p.
\end{equation}
The following fixed-parameter asymptotic is the central claim of this paper. In particular, the $o(1)$ is \emph{not} asserted to be uniform when the modulus or the endpoints vary with $n$.

\begin{theorem}[Weighted residue-class distribution]\label{thm:main}
Fix $h\ge2$, $c\in\mathbb Z/h\mathbb Z$, and $0\le a<b\le1$. Then, as $n\to\infty$ through all positive integers,
\begin{equation}\label{eq:main}
 \Psi\bigl(n!,c+h\mathbb Z;(n^a,n^b]\bigr)
 =\left(\frac{b-a}{h}+o(1)\right)F_n.
\end{equation}
\end{theorem}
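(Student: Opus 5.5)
We reduce the statement to an equidistribution problem for $\lfloor n/p\rfloor$ modulo $h$, weighted by $(n/p)\log p$, and then exploit the scale structure. Since $F_n = n\log n + O(n)$, the target is $\frac{b-a}{h}\,n\log n$.

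\textbf{Step 1: reduction to one term.} For $p>\sqrt n$ we have $v_p(n!)=\lfloor n/p\rfloor$. For $p\le \sqrt n$ we have $v_p(n!)\le n/(p-1)$, so these primes contribute at most $\sum_{p\le\sqrt n} n\log p/(p-1)=O(n\log\sqrt n)$ in total. That is too large to discard globally, so we split further. For any fixed $\varepsilon>0$, the primes $p\le n^{\varepsilon}$ contribute at most $\varepsilon n\log n+O(n)$, which is negligible after letting $\varepsilon\to0$. Hence we may assume $a\ge\varepsilon$.

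\textbf{Step 2: dyadic decomposition and the regime $p>n^{1/2}$.} We cover $(n^a,n^b]$ by intervals $(P,2P]$ and show that on each one
\[
\sum_{\substack{P<p\le 2P\\ v_p(n!)\equiv c\ (h)}} v_p(n!)\log p
=\frac1h\sum_{P<p\le2P} v_p(n!)\log p+o(n).
\]
Summing over the $\asymp\log n$ dyadic blocks, and using $\sum_{p\in J}v_p(n!)\log p=(b-a+o(1))n\log n$ by Mertens, then gives the theorem.

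\emph{Large primes, $P\ge n^{1-\delta}$.} Here $m=\lfloor n/p\rfloor$ is bounded by $n^{\delta}$, so we count directly. For fixed $m$, the primes with $\lfloor n/p\rfloor=m$ lie in $(n/(m+1),n/m]$, and their weighted count is $\sim n/(m(m+1))$ by the PNT, uniformly for $m\le n^{\delta}$ with $\delta$ small (Huxley-type short intervals; or simply the PNT for $m\le M$ fixed, with a tail bound afterwards). The mass is then $\sum_m m\cdot n/(m(m+1))$ restricted to $m\equiv c$, and each residue class receives a $1/h$ share up to $O(n)$ error per range of $\log$ length. A cleaner uniform treatment is the Fourier route below.

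\emph{Generic $P$, $n^{1/2}<P\le n^{1-\delta}$.} Write $\mathbf 1[\lfloor x\rfloor\equiv c\ (h)]=g(x/h)$, where $g$ is the 1-periodic indicator of $[c/h,(c+1)/h)$. Approximate $g$ above and below by Vaaler/Selberg trigonometric polynomials of degree $K$, whose mean is $1/h+O(1/K)$. The main term then gives $1/h$, and the $O(1/K)$ term contributes $O(n\log n/K)$ overall. The weight $v_p\approx n/p$ is smooth, so after partial summation it remains to show
\[
\sum_{P<p\le2P}\log p\;\ep\!\left(\frac{kn}{hp}\right)=o(P)\qquad(1\le k\le K).
\]
This is a standard exponential sum over primes with phase $f(x)=kn/(hx)$. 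We apply Vaughan's identity to reduce to type I and type II sums. Type I sums $\sum_{d}\sum_{m}\ep(kn/(hdm))$ are handled by the fixed-order derivative (van der Corput $q$-th derivative) estimate, where the $q$-th derivative has size $\lambda_q\asymp kn/(hP\,M^{q})$ in the inner variable of length $M$. Type II sums are handled by Cauchy–Schwarz followed by the same derivative estimate on the differenced phase.

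\emph{Small primes, $n^{\varepsilon}<P\le n^{1/2}$.} We treat $v_p(n!)=\lfloor n/p\rfloor+\lfloor n/p^2\rfloor+\cdots$ as the function $x\mapsto\sum_j\lfloor n/x^j\rfloor$. Since $\lfloor n/p\rfloor$ is huge here, the only relevant data is $\lfloor n/p\rfloor$ modulo $h$ combined with the higher terms. We Fourier-expand jointly in the fractional parts $\{n/(hp^j)\}$ for $j\le J=\lceil 1/\varepsilon\rceil$, which reduces the problem to sums $\sum_p\log p\,\ep\bigl(\sum_j k_j n/(hp^j)\bigr)$. Because the leading term $n/p$ dominates, the derivative sizes are governed by $k_1$ (or by the first nonzero $k_j$), and the same Vaughan plus derivative machinery applies.

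\textbf{Main obstacle.} I expect the hardest step to be making the fixed-order derivative estimate give a saving $P^{1-\eta}$ uniformly over the whole range $n^{\varepsilon}\le P\le n^{1-\delta}$. The phase has size $n/P$, which can be as large as $P^{1/\varepsilon}$. This forces the derivative order $q\approx 1/\varepsilon$, which is fixed but large, with a saving exponent like $2^{-q}$.

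The Vaughan parameters $U,V$ must then be chosen so that every type I inner variable is long enough for the $q$-th derivative to be small, and every type II range is balanced. I would first try $U=V=P^{1/3}$ and pick $q$ depending on $\log(n/P)/\log P$, checking that the resulting saving beats the $\log$ losses and the factor $K$. The near-endpoint ranges ($P$ close to $n$, where the phase is nearly constant) are where I expect to fall back on the direct PNT counting above.
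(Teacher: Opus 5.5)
Your architecture is the paper's. You use Mertens to discard the ends, the exact formula $v_p(n!)=\sum_{j\le r}\lfloor n/p^j\rfloor$, and a finite trigonometric approximation of the discontinuous residue indicator. You then apply Vaughan's identity with a fixed-order derivative test to Type I sums and to the Cauchy--Schwarz-differenced Type II sums, and finish by partial summation with $v_p(n!)\approx n/p$. Your range $n^{1/2}<P\le n^{1-\delta}$ is the paper's band $r=1$ and is fine. So is your top range, though excising $p>n^{1-\delta}$ by Mertens is easier than short-interval prime counting.

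\textbf{The gap is in the small-prime step.} You expand jointly in $\{n/(hp^j)\}$ for all $j\le J=\lceil 1/\varepsilon\rceil$ over the whole range $n^{\varepsilon}<P\le n^{1/2}$. You then assert that every nonzero frequency is controlled by ``the first nonzero $k_j$''. That fails. On a dyadic block with $P^{r+c}\le n\le P^{r+1-c}$, every coordinate with $j>r$ satisfies $n/(hp^j)<P^{-c}$. So a frequency whose first nonzero index is $j_0>r$ has phase $O(n/P^{j_0})=O(P^{-c})$, and its prime sum is $\asymp P$, with no cancellation at all.

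Equivalently, the points $(n/(hp^j))_{j\le J}$ are not equidistributed on the $J$-torus. They press against the jump hyperplanes $t_j=0$ of your step function. Hence neither the main term nor the error majorant of your approximation can be replaced by $P$ times its torus mean.

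The same degeneration occurs at every transition scale $p\approx n^{1/j}$. There the top coordinate $n/(hp^j)$ sits at the jump $1/h$ and does not oscillate. You noticed this only for $j=1$, i.e.\ $P$ near $n$.

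\textbf{The missing idea is the paper's band localization.} Work on fixed bands $\frac1{r+1}<a<b<\frac1r$. Expand only in the $r$ coordinates $n/(hp^j)$, $j\le r$, that actually occur in $v_p(n!)$. Then every nonzero frequency has $j_0\le r$, and $T=nP^{-j_0}\ge n^{1-rb}\ge P^{1/b-r}$, a fixed positive power of $P$. This is also what lets one fixed derivative order serve uniformly on the band.

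Next, delete logarithmic neighbourhoods of total width $O(\varepsilon)$ around each $1/j$ in $[\varepsilon,1-\varepsilon]$. By Mertens their mass is $O(\varepsilon F_n)+O_\varepsilon(n)$. Let $n\to\infty$, and only then $\varepsilon\downarrow0$. With that change the rest of your argument goes through essentially as in the paper.
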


For a finite union $A$ of arithmetic progressions, let $\delta(A)$ denote its natural density. Summing the disjoint residue classes in Theorem~\ref{thm:main} gives
\begin{corollary}\label{cor:BH}
For every fixed such $A$ and fixed $0\le a<b\le1$,
\begin{equation}\label{eq:BH}
 \Psi\bigl(n!,A;(n^a,n^b]\bigr)
 =\bigl((b-a)\delta(A)+o(1)\bigr)F_n.
\end{equation}
\end{corollary}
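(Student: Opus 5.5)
The statement to prove is Corollary~\ref{cor:BH}, and the plan is to reduce it directly to Theorem~\ref{thm:main} by finite additivity. A finite union $A$ of arithmetic progressions in the positive integers is periodic: if the progressions have moduli $q_1,\dots,q_r$, set $h=\operatorname{lcm}(q_1,\dots,q_r)$. We may assume $h\ge2$ by replacing $h$ with $2h$ if necessary. Then, up to a finite exceptional set $E$ of small positive integers, $A$ coincides with a disjoint union of residue classes $c+h\mathbb Z$ for $c$ in some set $C\subseteq\mathbb Z/h\mathbb Z$, and $\delta(A)=|C|/h$. The exceptional set $E$ is needed only when some progression starts late, i.e.\ is of the form $\{a_0+kq : k\ge0\}$ with a large $a_0$.

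First I split the mass \eqref{eq:mass} according to the residue of $v_p(n!)$ modulo $h$. Because the classes are disjoint,
\[
 \Psi\bigl(n!,A;J\bigr)=\sum_{c\in C}\Psi\bigl(n!,c+h\mathbb Z;J\bigr)+O\Bigl(\sum_{\substack{p\in J\\ v_p(n!)\in E'}}v_p(n!)\log p\Bigr),
\]
where $E'$ is the finite symmetric difference between $A$ and $\bigcup_{c\in C}(c+h\mathbb Z)$ inside $\mathbb Z_{>0}$, and $J=(n^a,n^b]$. Applying Theorem~\ref{thm:main} to each of the $|C|\le h$ classes, with $h$, $c$, $a$, $b$ all fixed, gives a main term $(|C|(b-a)/h+o(1))F_n=((b-a)\delta(A)+o(1))F_n$. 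Since there are finitely many classes, the $o(1)$ terms add up to an $o(1)$.

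The only point needing care is the error term coming from $E'$. Let $M=\max E'$. If $v_p(n!)\le M$, then $\lfloor n/p\rfloor\le M$, so $p>n/(M+1)$. Hence the error is at most
\[
 M\sum_{n/(M+1)<p\le n}\log p\ll_M n,
\]
by Chebyshev's bound. This is $o(F_n)$ because $F_n\sim n\log n$ by Stirling. With this the corollary follows.

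I do not expect any real obstacle: all the analytic difficulty lives in Theorem~\ref{thm:main}. The one subtlety is handling progressions that are not full residue classes, and the Chebyshev estimate above disposes of it, since all such discrepancies occur at bounded valuations.
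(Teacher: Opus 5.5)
Your proof is correct and follows the paper's route: you sum Theorem~\ref{thm:main} over the residue classes modulo the lcm of the moduli, and absorb the finitely many exceptional valuation values into an $O(n)=o(F_n)$ error. The paper bounds that exceptional mass directly by $K\vartheta(n)$; your restriction to $p>n/(M+1)$ is a harmless refinement of the same estimate.
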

This is the qualitative formula proposed in Berend--Harmse \cite[Conjecture~5.1]{BH}, in their notation $a=1-\beta$ and $b=1-\beta_1$. An arithmetic progression restricted to positive integers can differ from its eventual residue classes in finitely many valuation values; Section~\ref{sec:application} checks that these exceptions have mass $O(n)$. We make no assertion here concerning priority or novelty relative to work beyond the cited sources.

The method works with fixed derivative orders depending on a power band. Its main analytic step, the prime exponential-sum estimate in Proposition~\ref{prop:prime}, is stated with the subinterval uniformity needed in both the Type II estimate and partial summation.

\section{A derivative estimate on every subinterval}\label{sec:derivative}

Put $\ep(t)=\exp(2\pi i t)$. We use the $d$th derivative estimate of Arias de Reyna \cite{AdR}. If $d\ge3$, $K=2^d$, $\lfloor Y\rfloor>d$, and $0<\lambda\le g^{(d)}(t)\le\Lambda$ on an interval of length $Y$, then
\begin{equation}\label{eq:vdc}
 \frac1Y\left|\sum_{X<m\le X+Y}\ep(g(m))\right|
 \le 11\max\left\{\left(\frac{\Lambda}{\lambda Y}\right)^{2/K},
 \left(\frac{\Lambda^2}{\lambda}\right)^{1/(K-2)},
 (\lambda Y^d)^{-2/K}\right\}.
\end{equation}
Replacing $g$ by $-g$ handles a negative derivative. Only fixed $d$ and implicit constants depending on $d$ will be used.

\begin{lemma}[Subinterval form]\label{lem:subinterval}
Fix $u,c_0,C_0>0$. Suppose $P^u\ll M\ll P$, $P^{c_0}\ll T\ll P^{C_0}$, and $g$ is real and smooth on a container of length $O(M)$ contained in $[M,CM]$ for a fixed $C$. Suppose, for a fixed integer $d\ge3$ with $du>C_0$, that $g^{(d)}$ has one sign throughout the container and
\[
 |g^{(d)}(t)|\asymp TM^{-d}
\]
there, with fixed comparison constants. There exists $\delta>0$ such that on \emph{every} subinterval $I$ of that container,
\begin{equation}\label{eq:subinterval}
 \left|\sum_{m\in I\cap\mathbb Z}\ep(g(m))\right|\ll MP^{-\delta}.
\end{equation}
The constants may depend on the fixed data, including $d$.
\end{lemma}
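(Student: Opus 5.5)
We apply the derivative estimate \eqref{eq:vdc} directly on $I$, with $\lambda\asymp TM^{-d}$, $\Lambda\asymp TM^{-d}$, and $Y=|I|$. Replacing $g$ by $-g$ if necessary, we may assume $g^{(d)}>0$.

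The argument splits according to the length of $I$. Fix a small $\eta>0$, to be chosen later in terms of $u,c_0,C_0,d$.

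\emph{Short subintervals.} If $|I|\le MP^{-\eta}$, or if $\lfloor |I|\rfloor\le d$, the trivial bound $|I|+1$ already gives \eqref{eq:subinterval} with $\delta=\eta$. Here we use $M\gg P^u$, so that $1\ll MP^{-\eta}$ once $\eta<u$.

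\emph{Long subintervals.} Otherwise $MP^{-\eta}<Y\ll M$, and we bound the three terms in \eqref{eq:vdc} in turn, writing $K=2^d$.

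The first term is $\Lambda/(\lambda Y)\asymp Y^{-1}\ll M^{-1}P^{\eta}\ll P^{\eta-u}$. Its contribution to $\frac1Y|\sum|$ is therefore $\ll P^{-2(u-\eta)/K}$.

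The second term is $\Lambda^2/\lambda\asymp TM^{-d}$. By hypothesis $T\ll P^{C_0}$ and $M\gg P^u$, so $TM^{-d}\ll P^{C_0-du}$. The exponent $C_0-du$ is negative because $du>C_0$; this is exactly where that hypothesis enters. The contribution is $\ll P^{-(du-C_0)/(K-2)}$.

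The third term is $\lambda Y^d\asymp TM^{-d}Y^d\gg T P^{-d\eta}\gg P^{c_0-d\eta}$. This is a positive power of $P$ as soon as $\eta<c_0/d$, and the contribution is $\ll P^{-2(c_0-d\eta)/K}$.

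Multiplying back by $Y\ll M$ gives $|\sum_{m\in I}\ep(g(m))|\ll MP^{-\delta'}$, where $\delta'$ is the minimum of the three exponents above. Finally we choose $\eta=\tfrac12\min(u,c_0/d)$ and set $\delta=\min(\eta,\delta')>0$.

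The only step needing care is the third term. The lower bound on $\lambda Y^d$ must hold uniformly in the subinterval, and that is why $I$ has to be cut off at length $MP^{-\eta}$ rather than being arbitrary. Everything else is bookkeeping of exponents. The comparison constants in $|g^{(d)}|\asymp TM^{-d}$, together with the condition $I\subset[M,CM]$, only affect the implied constants, since all of $d$, $C$, $u$, $c_0$, $C_0$ are fixed.
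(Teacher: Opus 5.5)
Your argument is correct and is essentially the paper's: apply \eqref{eq:vdc} directly on $I$ with $\lambda\asymp\Lambda\asymp TM^{-d}$, and use $M\gg P^u$, $T\gg P^{c_0}$ and $du>C_0$ to extract a negative power of $P$ from each of the three terms. The only difference is cosmetic. The paper does not cut off at length $MP^{-\eta}$; it notes that after multiplying by $Y$ every term carries a positive power of $Y$ (the third becomes $\lambda^{-2/K}Y^{1-2d/K}$, and $1-2d/2^d>0$ because $d\ge3$), so it substitutes $Y\ll M$ and treats only $Y\le d+1$ trivially, which means your cutoff is a convenience rather than a necessity.
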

\begin{proof}
For an interval of length $Y>d+1$, apply \eqref{eq:vdc} with $\lambda,\Lambda\asymp TM^{-d}$. Multiplying by $Y$ bounds the three terms, up to fixed constants, by
\[
 M^{2/K}Y^{1-2/K},\qquad
 Y(TM^{-d})^{1/(K-2)},\qquad
 (TM^{-d})^{-2/K}Y^{1-2d/K}.
\]
All three exponents of $Y$ are positive for $d\ge3$, so $Y\ll M$ yields
\[
 \ll M\max\bigl\{M^{-2/K},(TM^{-d})^{1/(K-2)},T^{-2/K}\bigr\}.
\]
The hypotheses $M\gg P^u$, $T\gg P^{c_0}$, and $T M^{-d}\ll P^{C_0-du}$ give a fixed negative power of $P$ in each term. For $Y\le d+1$, use the trivial $O_d(1)$ bound and decrease $\delta$ if necessary. Endpoint conventions change the sum by at most $O(1)$.
\end{proof}

\section{Reciprocal phases on primes}\label{sec:primes}

Fix $r\ge1$ and real numbers
\begin{equation}\label{eq:band}
 \frac1{r+1}<a<b<\frac1r.
\end{equation}
Fix $h\ge2$ and a nonzero $\mathbf t=(t_1,\ldots,t_r)\in\mathbb Z^r$. Throughout this section, all of them are held fixed. Set
\begin{equation}\label{eq:phase}
 f(x)=\frac nh\sum_{j=1}^r t_jx^{-j},\quad
 j_0=\min\{j:t_j\ne0\},\quad T=nP^{-j_0}.
\end{equation}
For a dyadic container $(P,2P]$ meeting $(n^a,n^b]$, one has $P^\kappa\ll n\ll P^{1/\kappa}$ for a suitable fixed $\kappa>0$, and consequently, for fixed $c_0,C_0>0$,
\begin{equation}\label{eq:T-range}
 P^{c_0}\ll T\ll P^{C_0}.
\end{equation}
For instance, one can take $c_0<1/b-r$ and $C_0>1/a$, after harmless changes of constants. Note that $a>0$ in this section.

\begin{theorem}[Prime-phase bound]\label{prop:prime}
For some $\eta>0$ depending on the fixed parameters, every subinterval $I$ of $(P,2P]\cap(n^a,n^b]$ satisfies
\begin{equation}\label{eq:prime-bound}
 \sum_{p\in I}(\log p)\ep(f(p))\ll P^{1-\eta}.
\end{equation}
The estimate also holds with the prime sum replaced by $\sum_{m\in I}\Lambda(m)\ep(f(m))$.
\end{theorem}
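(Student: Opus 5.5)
We first prove the bound for $\sum_{m\in I}\Lambda(m)\ep(f(m))$ and then pass to primes. Prime powers $p^k$ with $k\ge2$ contribute $O(P^{1/2}\log P)$, so the passage to primes costs nothing. We use Vaughan's identity with parameters $U=V=P^{\theta}$, for a small fixed $\theta>0$ chosen at the end. It splits the sum into Type~I sums
\[
 \sum_{\ell\le U^2}\alpha_\ell\sum_{m:\ \ell m\in I}\ep(f(\ell m))
 \quad\text{(with possibly a smooth weight $\log m$)},\qquad |\alpha_\ell|\ll P^{o(1)},
\]
and Type~II sums
\[
 \sum_{U<\ell\le P/V}\ \sum_{V<m,\ \ell m\in I}\alpha_\ell\beta_m\,\ep(f(\ell m)).
\]
These are summed over $O(\log^2P)$ dyadic ranges $\ell\sim L$, $m\sim M$ with $LM\asymp P$. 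Throughout, $f(x)=\frac nh\sum_j t_jx^{-j}$ is dominated by its $j_0$ term. Since $x\asymp P$ and $n\ll P^{1/a}$ with the band condition \eqref{eq:band}, the size is $f(x)\asymp T$, where $T=nP^{-j_0}\gg P^{c_0}$. Moreover, each derivative satisfies $|f^{(d)}(x)|\asymp TP^{-d}$ with a fixed sign once $P$ is large. The reason is that the lower-order terms are smaller by factors of $P^{-1}$, and the coefficient $j_0(j_0+1)\cdots(j_0+d-1)$ is nonzero.

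\textbf{Type I.} For fixed $\ell\sim L\le P^{2\theta}$, consider $g(m)=f(\ell m)$ on $m\asymp M=P/L\ge P^{1-2\theta}$. Then $g^{(d)}(m)=\ell^d f^{(d)}(\ell m)\asymp T M^{-d}$ with one sign. We choose a fixed $d\ge3$ with $d(1-2\theta)>C_0$ and apply Lemma~\ref{lem:subinterval} with $u=1-2\theta$. This gives $\ll MP^{-\delta}$ on the subinterval $\{m:\ell m\in I\}$, uniformly in $\ell$. This uniformity is exactly why the subinterval form was established. The smooth weight $\log m$ is removed by partial summation, which again uses the subinterval uniformity. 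Summing over $\ell$ gives $\ll P^{1-\delta+o(1)}$.

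\textbf{Type II.} Here $P^\theta\ll L,M\ll P^{1-\theta}$. We apply Cauchy--Schwarz in $\ell$ and then Weyl differencing in $m$ with a shift length $H=P^{\epsilon}$ (the van der Corput $B$-process is not needed). This reduces the problem to bounding
\[
 \sum_{\ell\sim L}\ep\bigl(f(\ell m_1)-f(\ell m_2)\bigr),\qquad m_1\ne m_2,\ |m_1-m_2|\le H\text{ or general}.
\]
The interval condition $\ell m\in I$ is removed first by a standard truncated Fourier (Perron-type) separation at a cost of $P^{o(1)}$. It is simpler to take the full Cauchy--Schwarz over $m_1,m_2$. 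The phase $G(\ell)=f(\ell m_1)-f(\ell m_2)$ has $d$th derivative
\[
 \frac nh\sum_j t_j(-1)^d j(j+1)\cdots(j+d-1)\,\ell^{-j-d}\bigl(m_1^{-j}-m_2^{-j}\bigr),
\]
which is $\asymp T L^{-d}\,|m_1-m_2|/M$ with constant sign. Hence for $|m_1-m_2|\ge MP^{-\epsilon}$ Lemma~\ref{lem:subinterval} applies with $T$ replaced by $T|m_1-m_2|/M\gg P^{c_0-\epsilon}$ and $u=\theta$, choosing $d$ with $d\theta>C_0$. The near-diagonal pairs, those with $|m_1-m_2|<MP^{-\epsilon}$, contribute trivially $O(LM^2P^{-\epsilon})$. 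The total is then $\ll (LM)^2P^{-\min(\delta,\epsilon)}$ before taking square roots, so the Type~II sum is $\ll P^{1-\eta}$.

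The main obstacle is the Type~II step. First, the sign and size of $G^{(d)}$ must be controlled when several $t_j$ are nonzero. This needs $|m_1-m_2|/M$ to be not too small relative to $P^{-1}$ corrections, which our cutoff $P^{-\epsilon}$ with $\epsilon<1$ ensures. Second, the cross-condition $\ell m\in I$ must be decoupled cleanly. We would handle it by splitting $I$ into $P^{\epsilon}$ short pieces on which $\ell$ ranges over an interval independent of the piece of $m$. The boundary losses are absorbed as an $O(P^{1-\epsilon})$ error, and here the subinterval uniformity of Lemma~\ref{lem:subinterval} is used again. Finally, we take $\eta$ to be the minimum of the Type~I and Type~II savings minus the $P^{o(1)}$ losses.
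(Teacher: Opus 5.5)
Your proposal follows essentially the paper's proof. Vaughan's identity splits the sum; the Type~I sums are bounded by Lemma~\ref{lem:subinterval} uniformly on the intervals $\{m:\ell m\in I\}$; the Type~II sums are handled by Cauchy--Schwarz, with the difference phase having $d$th derivative $\asymp T(\Delta/M)L^{-d}$ of one sign, near-diagonal pairs treated trivially and the others by Lemma~\ref{lem:subinterval}. (The paper takes $U=V=P^{1/4}$ and doubles the $\Lambda_{>V}*1$ variable instead of the M\"obius one, which is immaterial; your $\epsilon$ must satisfy $\epsilon<\min(c_0,\theta)$.) The only real difference is that the Perron-type or short-piece decoupling of $\ell m\in I$ is unnecessary: after Cauchy--Schwarz, the admissible $\ell$ for each pair $(m_1,m_2)$ form an intersection of two intervals, hence an interval, which the subinterval lemma already covers. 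Your short-piece variant would in fact fail if Cauchy--Schwarz were applied piece by piece at the same scale $MP^{-\epsilon}$, since then every pair would be near-diagonal.
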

\begin{proof}
We apply Vaughan's identity with $U=V=P^{1/4}$, ignoring harmless integer parts:
\begin{equation}\label{eq:vaughan}
 \Lambda=\Lambda_{\le V}+\mu_{\le U}*\log
 -\mu_{\le U}*1*\Lambda_{\le V}
 +\mu_{>U}*1*\Lambda_{>V}.
\end{equation}
Here the subscripts denote support conditions on the argument. The first term vanishes on $(P,2P]$ for large $P$. The middle terms yield Type I sums with outer variable $q\le UV=P^{1/2}$ and inner variable $m\asymp M=P/q\gg P^{1/2}$. The coefficient of $q$ in the third term is bounded by
\[
 \left|\sum_{\substack{ab=q\\a\le U,\,b\le V}}\mu(a)\Lambda(b)\right|
 \le\sum_{b\mid q}\Lambda(b)=\log q.
\]
The inner weights in the second term are $\log m$.

For any fixed derivative order $d$, differentiating $f(qm)$ shows that its $j_0$-term has magnitude $\asymp TM^{-d}$ on the entire inner container. For $j>j_0$ the ratio to that term is $O(P^{-(j-j_0)})$, uniformly in $q$, because $qm\asymp P$. Thus, for large $P$, the derivative is of constant sign and is $\asymp TM^{-d}$. Choose $d\ge3$ with $d/2>C_0$ and apply Lemma~\ref{lem:subinterval}. Partial summation removes $\log m$ with an extra factor $O(\log P)$; the $q$-coefficients cost another $O(\log P)$. Summing $M\asymp P/q$ over $q\le P^{1/2}$ costs $\sum q^{-1}\ll\log P$. The Type I contribution is $\ll P^{1-\delta}(\log P)^3$.

For the last term of \eqref{eq:vaughan}, put
\[
 b_\ell=\sum_{\substack{v\mid\ell\\v>V}}\Lambda(v),\qquad 0\le b_\ell\le\log\ell.
\]
Partition $m>U$, $\ell>V$ into $O((\log P)^2)$ dyadic blocks $m\asymp M$, $\ell\asymp L$, where $M,L\gg P^{1/4}$ and $ML\asymp P$. The condition $m\ell\in I$ gives a consecutive interval of $\ell$ for each $m$; when the square is expanded, the set of admissible $m$ for a pair $(\ell_1,\ell_2)$ is the intersection of two intervals, hence again an interval. By Cauchy--Schwarz and $|\mu(m)|\le1$, the squared magnitude of one block is at most
\begin{equation}\label{eq:type2-cauchy}
 \ll M(\log P)^2\sum_{\ell_1,\ell_2\asymp L}
 \left|\sum_{m\in I(\ell_1,\ell_2)}
 \ep(f(m\ell_1)-f(m\ell_2))\right|.
\end{equation}
Truncation to a dyadic $m$-block is included in $I(\ell_1,\ell_2)$.

Suppose $\ell_1\ne\ell_2$ and write $\Delta=|\ell_1-\ell_2|$. The $d$th derivative in $m$ of the difference phase has magnitude
\begin{equation}\label{eq:diff-derivative}
 \asymp T\frac{\Delta}{L}M^{-d}
\end{equation}
and one sign, uniformly even for $\Delta=1$. Indeed, for $j=j_0+s$,
\begin{equation}\label{eq:ratio}
 \frac{|\ell_1^{-j}-\ell_2^{-j}|}
 {|\ell_1^{-j_0}-\ell_2^{-j_0}|}
 \le\frac{j}{j_0}\min(\ell_1,\ell_2)^{-s},
\end{equation}
as follows by integrating the two derivatives between $\ell_1$ and $\ell_2$. Multiplication by $m^{-s}$ and the fixed ratio of rising factorials bounds the higher-$j$ contribution relative to $j_0$ by $O(P^{-s})$. Since $m\ell_i\asymp P$, this estimate is uniform in $\Delta$.

The diagonal and the pairs with $\Delta\le LP^{-c_0/2}$ number
$O(L^2P^{-c_0/2}+L)$ and contribute at most $O(M)$ each. For every remaining pair, $T\Delta/L\gg P^{c_0/2}$ and $T\Delta/L\ll P^{C_0}$. Apply Lemma~\ref{lem:subinterval} with $u=1/4$, $c_0$ replaced by $c_0/2$, and fixed $d>4C_0$ to obtain an $O(MP^{-\delta})$ bound for the inner sum. Inserting these estimates in \eqref{eq:type2-cauchy} yields
\begin{equation}\label{eq:type2-bound}
 |\text{block}|^2\ll M^2L^2(\log P)^2
 \bigl(P^{-c_0/2}+L^{-1}+P^{-\delta}\bigr).
\end{equation}
Thus Type II also has a fixed power saving after summing its blocks. The same fixed $d>4C_0$ could have been used for Type I. Finally, prime powers in $(P,2P]$ contribute $O(P^{1/2}\log^2P)$ by a crude estimate; removing them completes the proof.
\end{proof}

\section{The valuation step function}\label{sec:fourier}

In the band \eqref{eq:band}, $p^{r+1}>n$ and $p^r<n$ for large $n$, so
\begin{equation}\label{eq:exactvaluation}
 v_p(n!)=\sum_{j=1}^r\left\lfloor\frac n{p^j}\right\rfloor.
\end{equation}
Fix $1\le z<h$ and define the one-periodic step function
\[
 g_z(t)=\ep\left(\frac zh\lfloor ht\rfloor\right),\qquad
 G_z(t_1,\ldots,t_r)=\prod_{j=1}^r g_z(t_j).
\]
It has absolute value one away from the immaterial jump convention, and its torus integral is zero because $\int_0^1g_z(t)\,dt=h^{-1}\sum_{k=0}^{h-1}\ep(zk/h)=0$. Moreover,
\begin{equation}\label{eq:G-relation}
 \ep\bigl(zv_p(n!)/h\bigr)
 =G_z\left(\frac n{hp},\frac n{hp^2},\ldots,\frac n{hp^r}\right).
\end{equation}

We spell out the approximation at the jumps. For each fixed $\varepsilon>0$, there exist finite trigonometric polynomials $Q_\varepsilon,H_\varepsilon$ on the $r$-torus such that $H_\varepsilon\ge0$, $|G_z-Q_\varepsilon|\le H_\varepsilon$ pointwise (including the jumps), and $\int H_\varepsilon\le\varepsilon$. To see this, take neighborhoods of all coordinate jump hyperplanes of total measure as small as desired. Approximate $G_z$ uniformly by a trigonometric polynomial off these neighborhoods, then majorize the absolute error by a continuous nonnegative function that is small off them and bounded on them. Uniform approximation of this continuous majorant by a polynomial, followed by adding a small constant, gives $H_\varepsilon$. Similarly $\int Q_\varepsilon=O(\varepsilon)$ since $\int G_z=0$. Every degree in this construction depends only on the fixed $\varepsilon,h,r$.

For every nonconstant Fourier monomial, Theorem~\ref{prop:prime} applies to its frequency vector; its constant term is treated by $\sum_{p\le2P}\log p\ll P$. It follows uniformly over subintervals $I$ of a relevant dyadic container that
\begin{equation}\label{eq:eps-dyadic}
 \left|\sum_{p\in I}(\log p)\ep\bigl(zv_p(n!)/h\bigr)\right|
 \ll\varepsilon P+O_\varepsilon(P^{1-\eta_\varepsilon}).
\end{equation}
Here $\eta_\varepsilon>0$ exists because only finitely many frequencies occur.

Partial summation, using the bound for \emph{all} initial subintervals, gives on each dyadic piece
\begin{equation}\label{eq:weighted-dyadic}
 \sum_{p\in(P,2P]\cap(n^a,n^b]}
 \frac{\log p}{p}\ep\bigl(zv_p(n!)/h\bigr)
 =O(\varepsilon)+O_\varepsilon(P^{-\eta_\varepsilon}).
\end{equation}
There are $O(\log n)$ pieces, each with $P\gg n^a$, so after division by $\log n$ the second terms tend to zero for each fixed $\varepsilon$. Then let $\varepsilon\downarrow0$. We have proved
\begin{lemma}\label{lem:weighted}
Under \eqref{eq:band}, for fixed $h$ and $1\le z<h$,
\begin{equation}\label{eq:weighted}
 \sum_{n^a<p\le n^b}\frac{\log p}{p}
 \ep\bigl(zv_p(n!)/h\bigr)=o(\log n).
\end{equation}
\end{lemma}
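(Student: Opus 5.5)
The plan is to assemble Lemma~\ref{lem:weighted} from the pieces just developed, making explicit the order of quantifiers in $\varepsilon$ and $n$.

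\emph{Step 1: approximation.} Fix $\varepsilon>0$ and take $Q_\varepsilon,H_\varepsilon$ as constructed above. Substituting the point $(n/(hp),\ldots,n/(hp^r))$ into \eqref{eq:G-relation}, we write
\[
\ep\bigl(zv_p(n!)/h\bigr)=Q_\varepsilon(\cdot)+E_p,\qquad |E_p|\le H_\varepsilon(\cdot).
\]
Each nonconstant monomial $\ep(\mathbf t\cdot x)$ of $Q_\varepsilon$ or $H_\varepsilon$, evaluated at this point, equals $\ep(f(p))$ with $f$ as in \eqref{eq:phase} and $\mathbf t\ne0$. Theorem~\ref{prop:prime} therefore gives $\ll P^{1-\eta}$ uniformly over subintervals $I$ of each dyadic container. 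The constant terms are the only ones that contribute main terms. That of $Q_\varepsilon$ is $O(\varepsilon)$, and that of $H_\varepsilon$ is $\int H_\varepsilon\le\varepsilon$. Since $H_\varepsilon\ge0$, the error $\sum_{p\in I}(\log p)|E_p|$ is bounded by $\sum_{p\in I}(\log p)H_\varepsilon(\cdot)$. By Chebyshev this is at most $\varepsilon\cdot O(P)$ plus the exponential-sum terms. This yields \eqref{eq:eps-dyadic}, with $\eta_\varepsilon$ the minimum over the finitely many frequencies.

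\emph{Step 2: removing the weight.} We pass to the weight $1/p$ by Abel summation on $(P,2P]\cap(n^a,n^b]$. Put $S(x)=\sum_{P<p\le x}(\log p)\ep(zv_p(n!)/h)$ over the band. Then the weighted sum equals
\[
S(X)/X+\int S(x)x^{-2}\,dx.
\]
The uniform bound on all initial subintervals gives $|S(x)|\ll\varepsilon P+P^{1-\eta_\varepsilon}$, and the integral has length $\asymp P$ against $x^{-2}\asymp P^{-2}$. This gives \eqref{eq:weighted-dyadic}.

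\emph{Step 3: summing the dyadic pieces.} The number of dyadic pieces is $(b-a)\log n/\log2+O(1)$. The $O(\varepsilon)$ terms therefore sum to $O(\varepsilon\log n)$. The secondary terms sum to $\ll_\varepsilon\sum_k (2^kn^a)^{-\eta_\varepsilon}\ll_\varepsilon n^{-a\eta_\varepsilon}$, which is $o(1)$ because $a>0$ under \eqref{eq:band}. Hence $\limsup|\text{sum}|/\log n\ll\varepsilon$, and letting $\varepsilon\to0$ gives \eqref{eq:weighted}.

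The main obstacle is the bookkeeping in Step 1. The implied constant in front of $\varepsilon P$ must be absolute, coming only from Chebyshev's bound and not from the degrees of the polynomials. Only the $P^{1-\eta_\varepsilon}$ terms may depend on $\varepsilon$, through the finitely many frequencies and their coefficient sizes. Those degree-dependent constants are harmless because $\varepsilon$ is fixed before $n\to\infty$.
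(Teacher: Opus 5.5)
Your proposal is correct and follows the paper's proof essentially step for step. You approximate $G_z$ by $Q_\varepsilon$ with the nonnegative majorant $H_\varepsilon$, bound each nonconstant frequency by the prime-phase theorem and the constant terms by Chebyshev to get $\varepsilon P+O_\varepsilon(P^{1-\eta_\varepsilon})$ on every subinterval, apply partial summation on each dyadic piece, and sum the $O(\log n)$ pieces with $\varepsilon$ fixed before $n\to\infty$; your explicit use of $H_\varepsilon\ge0$ and the geometric summation of the $P^{-\eta_\varepsilon}$ terms only spell out what the paper leaves terse.
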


\section{Completion of the main theorem}\label{sec:completion}

Legendre's formula gives, uniformly for $p\le n$,
\begin{equation}\label{eq:legendre}
 v_p(n!)=\frac np+O\left(1+\frac n{p(p-1)}\right).
\end{equation}
Since $\vartheta(n)=\sum_{p\le n}\log p=O(n)$ and
$\sum_p(\log p)/[p(p-1)]<\infty$, the total error in replacing $v_p(n!)\log p$ by $(n/p)\log p$ over \emph{any} prime set below $n$ is $O(n)$. Lemma~\ref{lem:weighted} therefore implies, for $z\not\equiv0\pmod h$ and \eqref{eq:band},
\begin{equation}\label{eq:character}
 \sum_{n^a<p\le n^b}v_p(n!)\log p\,
 \ep\bigl(zv_p(n!)/h\bigr)=o(n\log n)=o(F_n).
\end{equation}
The prime Mertens estimate $\sum_{p\le x}(\log p)/p=\log x+O(1)$ gives, for any fixed $0\le a<b\le1$,
\begin{equation}\label{eq:total}
 \sum_{n^a<p\le n^b}v_p(n!)\log p
 =(b-a)n\log n+O(n)=(b-a)F_n+o(F_n).
\end{equation}
Fourier inversion in $\mathbb Z/h\mathbb Z$ proves Theorem~\ref{thm:main} whenever \eqref{eq:band} holds.

To remove the restriction \eqref{eq:band}, fix $\varepsilon>0$. Delete the end regions with logarithmic coordinates $[0,\varepsilon]$ and $[1-\varepsilon,1]$, and fixed neighborhoods of the finitely many points $1/j$ lying in $[\varepsilon,1-\varepsilon]$, with total logarithmic width at most $\varepsilon$. The remaining part of $[a,b]$ is a finite union of closed subbands strictly inside intervals $(1/(r+1),1/r)$. Their endpoints and $r$ are fixed independently of $n$, so the proved result applies on each. By \eqref{eq:total}, the total $v_p(n!)\log p$ mass in the deleted bands is $O(\varepsilon F_n)+O_\varepsilon(n)$; the same upper bound holds for any residue class. Sum the preserved subbands, first let $n\to\infty$, and then let $\varepsilon\downarrow0$. This proves \eqref{eq:main}, including $a=0$ and $b=1$.

For completeness, if two sets $A,A'$ differ in a fixed finite set of positive valuation values $\{1,\ldots,K\}$, the corresponding mass difference is at most
\[
 \sum_{p:v_p(n!)\le K}v_p(n!)\log p\le K\vartheta(n)=O_K(n).
\]
This proves the endpoint qualification preceding Corollary~\ref{cor:BH}.

\section{A capacity condition for polynomial--factorial equations}\label{sec:application}

Suppose
\begin{equation}\label{eq:factorization}
 P(X)=c\prod_{i=1}^s f_i(X)^{m_i}\in\mathbb Z[X],
\end{equation}
where $c\in\mathbb Q^\times$, the $f_i\in\mathbb Z[X]$ are nonconstant and pairwise coprime over $\mathbb Q[X]$, and $m_i\ge1$. Put $d_i=\deg f_i$, $d=\sum_i m_id_i$, and $L=\operatorname{lcm}_i m_i$. For $I\subseteq\{1,\ldots,s\}$ define
\[
 \mathcal R(I)=\{v\bmod L:\ m_i\mid v\text{ for at least one }i\in I\}.
\]

\begin{theorem}[Necessary capacity inequalities]\label{thm:capacity}
If $P(x)=n!$ has infinitely many integral solutions $(x,n)$ with $n\ge1$, then for every subset $I\subseteq\{1,\ldots,s\}$,
\begin{equation}\label{eq:capacity}
 \frac{\sum_{i\in I}m_id_i}{d}\le\frac{|\mathcal R(I)|}{L}.
\end{equation}
Thus a strict violation of any inequality implies finiteness of integral solutions.
\end{theorem}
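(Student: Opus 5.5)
The plan is to play an exact arithmetic identity for $\log n!$, coming from the factorization \eqref{eq:factorization}, against the density statement of Corollary~\ref{cor:BH} with $a=0$ and $b=1$. Assume there are infinitely many solutions $(x,n)$. For each fixed $n$ the equation $P(x)=n!$ has only finitely many roots $x$, since $P$ is nonconstant. Hence along the solutions $n\to\infty$, and therefore $|x|\to\infty$.

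\emph{Step 1: a bounded exceptional set of primes.} Since the $f_i$ are pairwise coprime in $\mathbb Q[X]$, each resultant $R_{ij}=\operatorname{Res}(f_i,f_j)$ with $i\ne j$ is a nonzero integer. Let $S$ be the finite set of primes dividing some $R_{ij}$, or the numerator or denominator of $c$, or some leading coefficient. For $p\notin S$, $p$ divides at most one $f_i(x)$. Then $v_p(n!)=v_p(P(x))=m_iv_p(f_i(x))$ for that $i$, or $v_p(n!)=0$ if $p$ divides no $f_i(x)$. In particular, every $p\le n$ with $p\notin S$ divides exactly one $f_i(x)$, and then $m_i\mid v_p(n!)$. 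Each $p\in S$ contributes at most $v_p(n!)\log p\le n\log p$, so $S$ contributes $O(n)=o(F_n)$ in total.

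\emph{Step 2: the lower bound for the mass.} Fix $I$ and let $\mathcal P_I$ be the set of primes $p\le n$ with $p\notin S$ and $p\mid f_i(x)$ for some $i\in I$. Because $|x|\to\infty$, we have $f_i(x)\ne0$ eventually and $\log|f_i(x)|=d_i\log|x|+O(1)$. Also $F_n=\log|P(x)|=d\log|x|+O(1)$. Factoring $|f_i(x)|$ into primes and discarding $S$ gives
\[
 \sum_{p\in\mathcal P_I}v_p(n!)\log p=\sum_{i\in I}m_i\log|f_i(x)|-O(n)
 =\frac{\sum_{i\in I}m_id_i}{d}F_n+o(F_n).
\]
Here every prime dividing $f_i(x)$ divides $n!$, hence is at most $n$. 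The $O(n)$ term accounts for the primes in $S$: for $p\in S$ we have $v_p(f_i(x))\le v_p(n!)+O(1)$, so their total contribution is $O(n)$.

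\emph{Step 3: the upper bound from Corollary~\ref{cor:BH}.} Let $A$ be the set of positive integers lying in $\mathcal R(I)$ modulo $L$. This is a finite union of arithmetic progressions of density $|\mathcal R(I)|/L$. By Step 1, every $p\in\mathcal P_I$ has $v_p(n!)\in A$. Corollary~\ref{cor:BH} with $a=0$ and $b=1$ then bounds the left side of the display in Step 2 by $\bigl(|\mathcal R(I)|/L+o(1)\bigr)F_n$; the single prime $p=1=n^0$ is irrelevant. Comparing the two bounds along the infinite sequence of $n$ and letting $n\to\infty$ yields \eqref{eq:capacity}. The final sentence of the theorem is the contrapositive.

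I expect the only delicate point to be the bookkeeping in Steps 1 and 2. One must make sure that the primes dividing two of the $f_i(x)$, and those interacting with $c$, all lie in a fixed finite set independent of $n$. One must also make sure they cost only $O(n)$. The resultant argument handles both. Everything analytic is already contained in Corollary~\ref{cor:BH}.
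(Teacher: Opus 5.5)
Your proposal is correct and follows essentially the same route as the paper. You force $n\to\infty$ and $|x|\to\infty$, compare $\sum_{i\in I}m_i\log|f_i(x)|$ with $F_n$ via degrees, and discard the fixed finite set of primes coming from resultants and $c$ at cost $O(n)$. You observe that the remaining primes have $v_p(n!)\bmod L\in\mathcal R(I)$, and bound their mass by the main theorem with $(a,b)=(0,1)$. Using Corollary~\ref{cor:BH} instead of summing Theorem~\ref{thm:main} over the classes of $\mathcal R(I)$ is the same step, and your check that $v_p(f_i(x))\le v_p(n!)+O(1)$ at excluded primes only spells out what the paper's $O_P(n)$ bound there leaves implicit.
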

\begin{proof}
An unbounded family of solutions has $n\to\infty$ and $|x|\to\infty$: for fixed $n$ the polynomial $P(X)-n!$ has finitely many integer roots, and a bounded $x$ gives only bounded $n$. Taking logarithms of absolute values in \eqref{eq:factorization} yields
\begin{equation}\label{eq:degree-mass}
 \sum_{i\in I}m_i\log|f_i(x)|
 =\frac{\sum_{i\in I}m_id_i}{d}F_n+O_P(1),
\end{equation}
since $\log|f_i(x)|=d_i\log|x|+O_P(1)$ for $|x|\to\infty$ and $F_n=d\log|x|+O_P(1)$.

Only finitely many primes divide a fixed numerator or denominator of $c$, or a fixed nonzero integer common multiple of all pairwise resultants $\operatorname{Res}(f_i,f_j)$. Outside this set, the integer values $f_i(x)$ cannot share a prime divisor. For any such prime that contributes to the left side of \eqref{eq:degree-mass}, it belongs to a unique $f_i(x)$ and $v_p(n!)=m_i v_p(f_i(x))$, so its valuation class lies in $\mathcal R(I)$. At any one excluded prime, $v_p(n!)\log p=O_P(n)$, by Legendre's formula. Consequently \eqref{eq:degree-mass} is at most
\[
 \sum_{\substack{p\le n\\v_p(n!)\bmod L\in\mathcal R(I)}}
 v_p(n!)\log p+O_P(n)
 =\left(\frac{|\mathcal R(I)|}{L}+o(1)\right)F_n,
\]
where the last equality is Theorem~\ref{thm:main} with $(a,b)=(0,1)$. Divide by $F_n$ to obtain \eqref{eq:capacity}.
\end{proof}

\begin{corollary}\label{cor:example}
For every fixed integer $r\ge2$, the equation $x^r(x+1)=n!$ has only finitely many integral solutions $(x,n)$.
\end{corollary}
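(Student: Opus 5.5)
We apply Theorem~\ref{thm:capacity} to $P(X)=X^r(X+1)$ and exhibit one subset $I$ for which \eqref{eq:capacity} fails strictly.

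\emph{Setting up the data.} In the notation of \eqref{eq:factorization} take $c=1$, $s=2$, $f_1=X$ with $m_1=r$, and $f_2=X+1$ with $m_2=1$. The factors are pairwise coprime since $\operatorname{Res}(X,X+1)=\pm1$. Then $d_1=d_2=1$, so $d=r+1$, and $L=\operatorname{lcm}(r,1)=r$.

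\emph{Choosing $I$.} Take $I=\{1\}$, which isolates the $r$th-power factor. The left side of \eqref{eq:capacity} is $m_1d_1/d=r/(r+1)$. For the right side, $\mathcal R(\{1\})=\{v\bmod r:\ r\mid v\}=\{0\}$, so $|\mathcal R(I)|/L=1/r$.

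\emph{Checking strict violation.} The inequality $r/(r+1)\le 1/r$ would require $r^2\le r+1$. This fails for every $r\ge2$, since $r^2-r-1\ge1>0$. Hence \eqref{eq:capacity} is strictly violated for $I=\{1\}$, and Theorem~\ref{thm:capacity} implies that $x^r(x+1)=n!$ has only finitely many integral solutions $(x,n)$ with $n\ge1$.

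The only point needing care is that Theorem~\ref{thm:capacity} is stated for solutions with $n\ge1$. Only $n=0$ remains, where the equation becomes $x^r(x+1)=1$. This has no integer roots: by the rational root theorem the candidates are $x=\pm1$, giving values $2$ and $0$. So the finiteness statement holds with or without the restriction $n\ge1$. The argument is otherwise a direct computation; all analytic difficulty sits in Theorem~\ref{thm:main}, which is used through Theorem~\ref{thm:capacity} with $(a,b)=(0,1)$.
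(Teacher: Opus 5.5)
Your proof is correct and is essentially the paper's argument: apply Theorem~\ref{thm:capacity} to the singleton $I=\{1\}$ for the factor $X^r$, where $L=r$ and $\mathcal R(\{1\})=\{0\}$ turn \eqref{eq:capacity} into $r/(r+1)\le 1/r$, which fails for $r\ge2$. Your separate check that $n=0$ gives no solutions is a harmless addition that the paper leaves implicit.
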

\begin{proof}
For the singleton factor $X^r$, \eqref{eq:capacity} would require $r/(r+1)\le1/r$, whereas $r^2>r+1$ for $r\ge2$.
\end{proof}

\section*{Status of this draft}
This manuscript formalizes the argument in the author's research note of September 27, 2026. The fixed-parameter prime-phase estimate, in particular the Type II subinterval argument and the passage across the discontinuities of $G_z$, merits independent specialist verification. No explicit numerical onset for the asymptotic is claimed.

\end{document}